\def\F{\mathbb{F}}
\def\k{k_0}
\DeclareMathOperator{\AG}{AG}
\theoremstyle{definition}
\newtheorem{theorem}{Theorem}[section]
\newtheorem{lemma}[theorem]{Lemma}
\newtheorem{definition}[theorem]{Definition}
\newtheorem{remark}[theorem]{Remark}
\newtheorem{corollary}[theorem]{Corollary}
\newtheorem{example}[theorem]{Example}
\newcommand{\comments}[1]{}
\author{Maarten De Boeck \and Geertrui Van de Voorde}
\title{A note on large Kakeya sets}
\date{}
\begin{document}
\maketitle

\begin{abstract}
	A {\em Kakeya set} $\mathcal{K}$ in an affine plane of order $q$ is the point set covered by a set $\mathcal{L}$ of $q+1$ pairwise non-parallel lines. Large Kakeya sets were studied by Dover and Mellinger; in \cite{dm} they showed that Kakeya sets with size at least $q^{2}-3q+9$ contain a large {\em knot} (a point of $\mathcal{K}$ lying on many lines of $\mathcal{L}$). 
	\par In this paper, we improve on this result by showing that Kakeya set of size at least $\approx q^{2}-q\sqrt{q}+\frac{3}{2}q$ contain a large knot. Furthermore, we obtain a sharp result for planes of square order containing a Baer subplane.
\end{abstract}

\paragraph*{Keywords:} Kakeya set
\paragraph*{MSC 2010 codes:} 05B25, 51E15, 51E20

\section{Introduction}

\par An \emph{affine plane} of order $q$ is a set of $q^{2}$ points and $q^{2}+q$ lines such that every two points lie on exactly one line and any two lines meet in at most one point. It is not too hard to see that the line set of an affine plane of order $q$ can be partitioned into $q+1$ sets of $q$ pairwise disjoint lines, called \emph{parallel classes}. The parallel classes correspond to the so-called points at infinity of the affine plane. We denote the Desarguesian affine plane of order $q$ by $\AG(2,q)$; it is defined over the finite field $\F_{q}$. Affine planes are classical objects in finite geometry; from a design theory point of view they are $2-(q^{2},q,1)$ designs.
\par A \emph{Kakeya set} (also called a minimal Besicovitch set) of an affine plane is the set of points covered by a set of lines  having exactly one line in each parallel class. The study of such sets is the finite field equivalent of the classical euclidean Kakeya problem (see \cite[Section 1.3]{T} for a short survey). The finite field Kakeya problem was first posed in \cite{W}. For $n$-dimensional Kakeya sets (which we do not introduce here) an important result (solving the main question from \cite{W}) was proved in \cite{D}, which was later improved in \cite{ZKS,SS1}.
\par The main research question on Kakeya sets in affine planes is to classify the smallest and largest examples. Most results were obtained for the Desarguesian affine plane. It is easy to see that a Kakeya set $\mathcal{K}$ in an affine plane of order $q$ has size at least $\frac{q(q+1)}{2}$. This bound is met if and only if $q$ is even and the $q+1$ lines defining the Kakeya set form a dual hyperoval together with the line at infinity. The existence of (dual) hyperovals in arbitrary affine and projective planes of even order is a major open question in finite geometry. Hyperovals are known to exist in the Desarguesian affine plane (the classical example consists of a conic and its nucleus, but many examples are known, see \cite{cher} for an overview), but also in some other affine planes, see \cite{dtz,opp}.
\par For the Desarguesian plane we also have the following results. Theorem \ref{even}, dealing with the even order case, is from \cite{bdms} and continues research from \cite{bb}. Theorem \ref{odd}, from \cite{bm}, deals with the odd order case.

\begin{theorem}[{\cite[Theorem 3.6]{bdms}}]\label{even}
	If $\mathcal{K}$ is a Kakeya set in $\AG(2,q)$, $q>8$ even, then only the following possibilities can occur.
	\begin{itemize}
		\item $|\mathcal{K}|=\frac{q(q+1)}{2}$ and $\mathcal{K}$ arises from a dual hyperoval.
		\item $|\mathcal{K}|=\frac{q(q+2)}{2}$ and $\mathcal{K}$ arises from a dual oval and a line not extending it to a dual hyperoval.
		\item $|\mathcal{K}|=\frac{q(q+2)}{2}+\frac{q}{4}$ and $\mathcal{K}$ arises from a KM-arc of type $4$ as described in \cite[Example 3.1]{bdms}.
		\item $|\mathcal{K}|\geq\frac{q(q+2)}{2}+\frac{q}{4}+1$.
	\end{itemize}
\end{theorem}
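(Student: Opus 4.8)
The plan is to reduce the classification to a double-counting analysis of how the $q+1$ lines of $\mathcal{L}$ meet each other, and then to translate the extremal configurations into the language of (dual) ovals, hyperovals and KM-arcs.

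First I would set up the standard counting. Write $t_i$ for the number of affine points lying on exactly $i$ lines of $\mathcal{L}$. Since $\mathcal{L}$ has one line in each of the $q+1$ parallel classes, any two of its lines are non-parallel and hence meet in exactly one affine point, so
\[
\sum_{i\ge 1} t_i=|\mathcal{K}|,\qquad \sum_{i\ge 1} i\,t_i=q(q+1),\qquad \sum_{i\ge 2}\binom{i}{2}t_i=\binom{q+1}{2}.
\]
Eliminating, one obtains the clean identity $|\mathcal{K}|-\tfrac{q(q+1)}{2}=\sum_{i\ge 3}\binom{i-1}{2}\,t_i=:\delta\ge 0$, so that $|\mathcal{K}|$ is minimal exactly when every knot is a double point, i.e.\ when no three lines of $\mathcal{L}$ are concurrent. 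In that case $\mathcal{L}$ is a dual oval, and since its $q+1$ lines already meet $\ell_\infty$ in the $q+1$ distinct points at infinity, $\mathcal{L}\cup\{\ell_\infty\}$ is a set of $q+2$ lines no three concurrent, i.e.\ a dual hyperoval; this recovers the first bullet and explains why $q$ must be even.

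Next I would dualise. The lines of $\mathcal{L}$ become a set $\mathcal{P}$ of $q+1$ points, and $\ell_\infty$ becomes a point $L_\infty\notin\mathcal{P}$ such that every line through $L_\infty$ meets $\mathcal{P}$ in exactly one point; equivalently $\mathcal{P}':=\mathcal{P}\cup\{L_\infty\}$ has size $q+2$ and carries a full pencil of $2$-secants through $L_\infty$. A knot of multiplicity $i$ corresponds to an $i$-secant of $\mathcal{P}$ not through $L_\infty$, and $|\mathcal{K}|=q^{2}-(\text{number of external lines of }\mathcal{P}\text{ off }L_\infty)$. In this picture $\delta=0$ says $\mathcal{P}'$ is a hyperoval, while small $\delta$ says $\mathcal{P}'$ is \emph{almost} a hyperoval: all but a controlled number of lines meet it in $0$ or $2$ points, the defect being measured by the high-order secants off $L_\infty$.

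The crux, and the hard part, is to prove the two gaps in the theorem: that $0<\delta<\tfrac{q}{2}$ and $\tfrac{q}{2}<\delta<\tfrac{3q}{4}$ are both impossible. Here I would exploit the rigidity of even characteristic, showing that a single triple point cannot occur in isolation. Concretely, if some line of $\mathcal{L}$ carries a triple point then the remaining $q$ lines cover at most $q-1$ of its points, forcing private ($1$-fold) points to appear; dually, the $2$-secants through a point of a hyperoval pair up all $q+2$ of its points and leave no tangent, so a lone $3$-secant propagates into a whole family of extra concurrences and drives $\delta$ up to at least $\tfrac{q}{2}$. Analysing the boundary value $\delta=\tfrac{q}{2}$ then pins the configuration down as a dual oval together with one further line failing to complete it to a dual hyperoval (second bullet), and pushing the same analysis one step further identifies the next admissible value $\delta=\tfrac{3q}{4}$ with the KM-arc of type $4$ of \cite[Example 3.1]{bdms} (third bullet), everything else satisfying $\delta\ge\tfrac{3q}{4}+1$ (fourth bullet). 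I expect the delicate part to be quantifying this cascade precisely and excluding the intermediate range $\tfrac{q}{2}<\delta<\tfrac{3q}{4}$, which is exactly where the structure theory of small KM-arcs in even order must be brought in.
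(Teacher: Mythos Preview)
This theorem is not proved in the present paper: it is quoted in the introduction as a result from \cite{bdms} (their Theorem~3.6), and the paper gives no argument for it whatsoever. There is therefore no ``paper's own proof'' to compare your proposal against. Your write-up is a plausible sketch of how one might approach the result in \cite{bdms}, but any genuine comparison would have to be made against that paper, not this one.

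That said, as a standalone sketch your proposal has the right shape (standard double-counting, dualisation, interpreting $\delta$ as a defect from a hyperoval), but the part you yourself flag as ``the crux'' is essentially the entire content of the theorem, and you do not actually carry it out. The assertion that ``a lone $3$-secant propagates into a whole family of extra concurrences and drives $\delta$ up to at least $q/2$'' is exactly what needs proving, and the mechanism you describe (pairing of $2$-secants, private points on a line) is too vague to force the specific jumps $\delta\in\{0,\,q/2,\,3q/4\}$. In particular, identifying the $\delta=3q/4$ case with a KM-arc of type~$4$ requires the structural characterisation of such arcs, which you invoke but do not supply. So the proposal is an outline with the hard step unfilled rather than a proof.
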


\begin{theorem}[\cite{bm}]\label{odd}
	If $\mathcal{K}$ is a Kakeya set in $\AG(2,q)$, $q$ odd, then $|\mathcal{K}|\geq \frac{q(q+1)}{2}+\frac{q-1}{2}$. Equality holds if and only if $\mathcal{K}$ arises from a dual oval and one additional line.
\end{theorem}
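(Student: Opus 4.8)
The plan is to combine an exact double-counting identity with a duality that isolates precisely where the parity of $q$ enters. Write $t_i$ for the number of points lying on exactly $i$ of the $q+1$ lines of $\mathcal{L}$. Since the lines are pairwise non-parallel, any two meet in a unique affine point, so counting points, incidences and intersecting pairs gives $\sum_i t_i=|\mathcal{K}|$, $\sum_i i\,t_i=q(q+1)$ and $\sum_i\binom{i}{2}t_i=\binom{q+1}{2}$. Eliminating yields
\[
|\mathcal{K}|=\frac{q(q+1)}{2}+\frac{1}{2}\sum_{i\geq 3}(i-1)(i-2)\,t_i ,
\]
so $|\mathcal{K}|\geq \frac{q(q+1)}{2}$ always, with equality iff every point lies on at most two lines, i.e.\ iff $\mathcal{L}$ has no three concurrent lines (a dual oval). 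The whole content of the theorem is therefore to bound the excess $\sum_{i\geq 3}(i-1)(i-2)t_i$ below by $q-1$, and to determine when equality holds.

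Next I would dualise in $\PG(2,q)$. The $q+1$ lines of $\mathcal{L}$ become a set $B$ of $q+1$ points, and $\ell_\infty$ becomes a point $L_\infty\notin B$; pairwise non-parallelism says no two points of $B$ are collinear with $L_\infty$, so each of the $q+1$ lines through $L_\infty$ carries exactly one point of $B$. Thus $L_\infty$ is a \emph{nucleus} of $B$: every line through it is tangent. A knot on $i$ lines dualises to a line meeting $B$ in $i$ points, and the uncovered affine points correspond exactly to the external lines of $B$; a short computation rewrites the excess as $\sum_{i\ge3}(i-1)(i-2)t_i=q(q-1)-2\,e(B)$, where $e(B)$ is the number of external lines. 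The bound is hence equivalent to $e(B)\leq\frac{(q-1)^2}{2}$. Here odd order bites: if the excess were $0$ then $B$ would be a $(q+1)$-arc, hence a conic by Segre's theorem; but for $q$ odd the tangents of a conic are never concurrent, so a conic has no nucleus, a contradiction. This forces at least one triple point, giving only the trivial improvement $|\mathcal{K}|\ge \frac{q(q+1)}{2}+1$.

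To reach the sharp $\frac{q-1}{2}$ I would pass to the R\'edei polynomial. Choosing coordinates so that $\mathcal{L}$ consists of the lines $y=mx+c_m$ ($m\in\F_q$) and one vertical line, set $R(X,Y)=\prod_{m\in\F_q}(Y-mX-c_m)$. For $x_0$ other than the vertical line the number of uncovered points on $X=x_0$ equals $q-\deg\gcd\big(R(x_0,Y),\,Y^q-Y\big)$, so the uncovered count measures how far each $R(x_0,Y)$ is from $Y^q-Y$. Reading the intercept function $m\mapsto c_m$ as the graph of a function in a second affine plane, the uncovered points translate into the combined defect of the maps $m\mapsto c_m-sm$, a quantity controlled by the theory of lacunary polynomials (equivalently, of directions determined by a function graph). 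The quadratic character supplies the exact constant: in the extremal configuration the $q$ finite-slope lines are the tangents to a parabola $y=x^2$, whose tangent slopes $2a$ already exhaust $\F_q$, and the added vertical line meets them in only $\frac{q+1}{2}$ distinct points because $a\mapsto a^2$ takes $\frac{q+1}{2}$ values, producing exactly $\frac{q-1}{2}$ triple points and $|\mathcal{K}|=\frac{q^2+2q-1}{2}$.

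The main obstacle is precisely this quantitative jump: elementary counting and the conic obstruction yield only a single triple point, whereas the theorem needs total excess $q-1$, and bridging the gap requires the polynomial-method input above. The equality case is equally delicate: I would show that equality forces $R(x_0,Y)$ to have, for every $x_0$, exactly $\frac{q-1}{2}$ doubled roots following a uniform quadratic pattern, which pins the finite-slope lines down to the tangent lines of a conic and the remaining line to the missing direction. Tracing this back through the duality identifies $\mathcal{K}$ as arising from a dual oval together with one additional line, as claimed.
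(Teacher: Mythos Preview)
The paper does not prove this statement at all: Theorem~\ref{odd} is quoted from \cite{bm} as background and carries no argument in the present paper. There is therefore nothing to compare your attempt against here.

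On the substance of your sketch: the double-counting identity
\[
|\mathcal{K}|=\frac{q(q+1)}{2}+\frac{1}{2}\sum_{i\geq 3}(i-1)(i-2)\,t_i
\]
is correct, as is the dual reformulation $e(B)\leq\frac{(q-1)^2}{2}$ and the Segre obstruction showing the excess is strictly positive for $q$ odd. But, as you explicitly acknowledge, this yields only $|\mathcal{K}|\geq\frac{q(q+1)}{2}+1$. The passage you summarise as ``a quantity controlled by the theory of lacunary polynomials'' is not a lemma one invokes; it \emph{is} the theorem. Blokhuis and Mazzocca's proof runs precisely through the R\'edei polynomial and the direction-set machinery (lacunary polynomials \`a la Blokhuis--Ball--Brouwer--Storme--Sz\H{o}nyi), and the hard quantitative step---getting from one secant of high multiplicity to total excess at least $q-1$---takes real work there. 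Your equality analysis is likewise only a plausibility sketch: showing that the uniform ``$\frac{q-1}{2}$ doubled roots'' pattern forces the finite-slope lines to be the tangent pencil of a conic is again the substantive part of \cite{bm}, not a consequence of what you have written. So what you have is an accurate roadmap of the Blokhuis--Mazzocca approach, with the load-bearing steps named but not carried out.
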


\par Examples of small Kakeya sets that do not arise from (hyper)ovals or KM-arcs were constructed in \cite{dm2}.
\par At the other end of the spectrum of possible sizes of Kakeya sets, it is easy to see that the maximal size is $q^{2}$ and that this bound is met if and only if the underlying set of lines is the set of $q+1$ lines through a fixed point. More generally, Dover and Mellinger proved the following result. Note that this result holds for any affine plane, not only for Desarguesian affine planes.

\begin{theorem}[{\cite[Theorem 3.3]{dm}}]\label{old}
	Let $\mathcal{K}$ be a Kakeya set in an affine plane of order $q>12$. If $|\mathcal{K}|\geq q^{2}-3q+9$, then $|\mathcal{K}|=q^{2}-kq+k^{2}$ for some $k\in\{0,1,2,3\}$, and $\mathcal{K}$ has a $(q+1-k)$-knot.
\end{theorem}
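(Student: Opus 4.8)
The plan is to translate the size hypothesis into a bound on the total incidence excess, locate one dominant knot, and then read off the exact size from the few lines that miss that knot.

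First I would set up the two standard double counts. Writing $r_Q$ for the number of lines of $\mathcal{L}$ through a point $Q$, counting flags gives $\sum_Q r_Q = q(q+1)$, and counting the $\binom{q+1}{2}$ pairwise intersections (every two of the $q+1$ lines are non-parallel, hence meet in a unique point) gives $\sum_Q \binom{r_Q}{2} = \binom{q+1}{2}$. Since $|\mathcal{K}| = \sum_Q 1$, the first identity yields $|\mathcal{K}| = q(q+1) - E$ with $E := \sum_Q (r_Q - 1)$, so the hypothesis $|\mathcal{K}| \ge q^2 - 3q + 9$ is exactly the bound $E \le 4q - 9$. Next, let $t$ be the maximum multiplicity, attained at a knot $P$, and set $s = q+1-t$, the number of lines of $\mathcal{L}$ missing $P$. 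The $t$ lines through $P$ cover $1 + t(q-1)$ points, while each of the $s$ remaining (``outside'') lines meets these $t$ concurrent lines in $t$ distinct points (distinct because their common point $P$ lies off the outside line), so it carries exactly $s-1$ points off the pencil. Writing $F$ for the number of points lying on an outside line but on no line through $P$, I obtain the exact decomposition $|\mathcal{K}| = q^2 - sq + s + F$, together with $F \le s(s-1)$ since only $s(s-1)$ off-pencil incidences are available. Hence $|\mathcal{K}| \le q^2 - sq + s^2$.

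Combining $q^2 - 3q + 9 \le |\mathcal{K}| \le q^2 - sq + s^2$ gives $s^2 - sq + 3q - 9 \ge 0$, a quadratic in $s$ with roots $3$ and $q-3$, so either $s \le 3$ or $s \ge q-3$. To discard the second range I would use that $s \ge q-3$ means $t \le 4$: for $r_Q \le 4$ one checks $\binom{r_Q}{2} \le 2(r_Q - 1)$, whence $\binom{q+1}{2} = \sum_Q \binom{r_Q}{2} \le 2E$ and therefore $|\mathcal{K}| = q(q+1) - E \le \tfrac{3}{4}q(q+1)$. This contradicts $|\mathcal{K}| \ge q^2 - 3q + 9$ exactly when $q^2 - 15q + 36 > 0$, i.e.\ for $q > 12$, which is the hypothesis. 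Thus $s = k \in \{0,1,2,3\}$ and $P$ is a $(q+1-k)$-knot, giving the claimed knot and the upper estimate on the size.

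It remains to force $F = s(s-1)$, so that $|\mathcal{K}| = q^2 - kq + k^2$ exactly. For $s \in \{0,1\}$ this is automatic, since $F \le s(s-1) = 0$. For $s = 3$ it follows from the size hypothesis alone: $F \le 6$ gives $|\mathcal{K}| \le q^2 - 3q + 9$, while $|\mathcal{K}| \ge q^2 - 3q + 9$ forces the reverse inequality, so $F = 6$ and $|\mathcal{K}| = q^2 - 3q + 9$. The one case where counting is insufficient, and the main obstacle, is $s = 2$: there the bounds only give $F \in \{1,2\}$, i.e.\ size $q^2 - 2q + 3$ or $q^2 - 2q + 4$, and the spurious value $q^2 - 2q + 3$ (not of the form $q^2 - kq + k^2$) must be excluded by the affine structure rather than by counting.

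For that excluded case I would argue with parallelism. Let $g_1, g_2$ be the two outside lines and $Y = g_1 \cap g_2$; the value $F = 1$ occurs precisely when $Y$ lies off the pencil. Through $P$ there is exactly one line in each of the $q+1$ directions, and $q-1$ of these directions are used by the pencil, leaving only the directions of $g_1$ and $g_2$. So if $Y$ were off every pencil line, the line $PY$ would carry the direction of $g_1$ or of $g_2$; but a line through $P$ parallel to $g_i$ that also meets $g_i$ (at $Y$) must coincide with $g_i$, contradicting $P \notin g_i$. Hence $Y$ lies on a pencil line, the two outside lines share no off-pencil point, $F = 2$, and $|\mathcal{K}| = q^2 - 2q + 4$. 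I expect this parallelism step to be the only genuinely geometric ingredient; all of the size bookkeeping reduces to the double counts above, with the choice of $P$ as any maximum-multiplicity point leaving the count unchanged.
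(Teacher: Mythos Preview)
The paper does not supply its own proof of this theorem; it is quoted from Dover and Mellinger and then superseded by Theorem~\ref{maintheoremalt}. Your argument is correct, and its counting skeleton is exactly the specialization of the paper's proof of Theorem~\ref{maintheoremalt} to the threshold $q^{2}-3q+9$: your inequality $|\mathcal{K}|\le q^{2}-sq+s^{2}$ is the bound $x_{0}\ge g_{q}(k)$ there, and your device $\binom{r_{Q}}{2}\le 2(r_{Q}-1)$ for $r_{Q}\le 4$ is, after clearing a factor $2$, precisely the inequality $(q+1-k-i)(i-1)\ge 0$ that the paper sums to obtain $x_{0}\ge f_{q}(k)$, evaluated at $q+1-k=4$; both give the same cutoff $|\mathcal{K}|\le \tfrac{3}{4}q(q+1)$ and hence the same requirement $q>12$.

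Where you genuinely go beyond what the paper writes is the $s=2$ case. The paper's general theorem only traps the size in the interval $[q^{2}-2q+3,\,q^{2}-2q+4]$ and then remarks, without argument, that ``it is easy to see that only $q^{2}-2q+4$ is admissible''. Your parallel-class argument (the line $PY$ must lie in one of the $q+1$ parallel classes, all of which are accounted for by the pencil together with $g_{1},g_{2}$, forcing $Y$ onto a pencil line) supplies exactly that missing step and is the clean way to do it. So your write-up is not merely a specialization of the paper's method: it also fills in the one geometric detail the paper leaves to the reader, and it does so correctly.
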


In this theorem a $j$-\emph{knot} is a point of the Kakeya set that is on $j$ lines of the underlying line set, but not on $j+1$ of them. The main result of this paper is an improvement of Theorem \ref{old}.

\section{The main result}

Our main result, Theorem \ref{maintheoremalt}, gives a clear description of the largest Kakeya sets and shows that there are gaps in the spectrum of admissible sizes of large Kakeya sets. Note that Theorem \ref{old} described an interval of size $\approx 3q$. Theorem \ref{maintheoremalt} describes an interval of size $\approx q\sqrt{q}$ as we will see in Corollary \ref{maintheorem}. We first define and investigate some functions that we will use in the proof of this main theorem.

\begin{definition}
	We define
	\begin{align*}
		f_{q}(k)=\frac{qk}{q+1-k}\;,\qquad\qquad g_{q}(k)=k(q-k)
	\end{align*}
	as functions on $\left[0,q\right]$ and we denote $\k=\sqrt{q+\frac{1}{4}}-\frac{1}{2}$. 
\end{definition}

\begin{lemma}\label{fqgq}
	We have the following results for the functions $f_{q}$ and $g_{q}$:
	\begin{enumerate}[(i)]
		\item $f_{q}$ is increasing on the whole domain,
		\item $g_{q}(k)=g_{q}(q-k)$,
		\item $f_q\left(q-\k\right)=g_q\left(q-\k\right)$ and $f_{q}(k)\leq g_{q}(k)$ if $k\leq q-\k$,
		 \item if $k\in [s,q-s]$ for some $s\in \left[0,\frac{q}{2}\right]$ then $g_q(s)\leq g_q(k)$,
		\item $g_{q}\left(q-\k\right)\leq\min\left\{g_{q}\left(q-\left\lceil\k\right\rceil\right),f_{q}\left(q-\left\lfloor\k\right\rfloor\right)\right\}$ if $q\geq4$.
	\end{enumerate}
\end{lemma}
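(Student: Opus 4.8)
The plan is to handle the five items in order, exploiting throughout the algebraic identity hidden in the definition of $\k$. Squaring $\k+\tfrac12=\sqrt{q+\tfrac14}$ gives $\k^2+\k=q$, that is,
\[
\k(\k+1)=q\qquad\text{and}\qquad q-\k=\k^2 .
\]
This is exactly what makes $q-\k$ the point where $f_q$ and $g_q$ meet, and I will use it in (iii) and (v). For (i) I would simply differentiate: a short computation gives $f_q'(k)=\frac{q(q+1)}{(q+1-k)^2}>0$ on $[0,q]$ (where $q+1-k\ge1$), so $f_q$ is strictly increasing. Item (ii) is immediate, since $g_q(q-k)=(q-k)\bigl(q-(q-k)\bigr)=(q-k)k=g_q(k)$.

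For (iii) I would study $h(k):=g_q(k)-f_q(k)$. Putting it over a common denominator and pulling out a factor $k$,
\[
h(k)=k\cdot\frac{(q-k)(q+1-k)-q}{q+1-k}=\frac{k\,p(k)}{q+1-k},\qquad p(k)=k^2-(2q+1)k+q^2 .
\]
Solving $p(k)=0$ by the quadratic formula yields $k=q+\tfrac12\pm\sqrt{q+\tfrac14}$, i.e. $k=q-\k$ and $k=q+1+\k$, the latter lying outside $[0,q]$. Hence $p(q-\k)=0$, so $h(q-\k)=0$, which is the equality $f_q(q-\k)=g_q(q-\k)$. Since $p$ is an upward parabola, $p(k)\ge0$ for $k$ left of its smaller root $q-\k$; as $k\ge0$ and $q+1-k>0$ there, $h(k)\ge0$ on $[0,q-\k]$, giving $f_q(k)\le g_q(k)$ for $k\le q-\k$.

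Item (iv) follows from the factorisation
\[
g_q(k)-g_q(s)=(k-s)(q-k-s),
\]
valid by direct expansion; for $k\in[s,q-s]$ both factors are nonnegative (the second because $k\le q-s$ forces $q-k-s\ge0$), so $g_q(k)\ge g_q(s)$.

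Finally, for (v) I would split the minimum. The bound $g_q(q-\k)\le f_q(q-\lfloor\k\rfloor)$ is the easy half: by (iii) we have $g_q(q-\k)=f_q(q-\k)$, and since $\lfloor\k\rfloor\le\k$ gives $q-\lfloor\k\rfloor\ge q-\k$, monotonicity (i) yields $f_q(q-\k)\le f_q(q-\lfloor\k\rfloor)$. For the other bound, by (ii) it is equivalent to $g_q(\k)\le g_q(\lceil\k\rceil)$, which I would obtain from (iv) with $s=\k$: one checks $\k\le q/2$ (true for $q>2$) and $\lceil\k\rceil\in[\k,q-\k]$, where the only substantial point is $\lceil\k\rceil\le q-\k=\k^2$. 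This inequality is the main obstacle and is precisely where $q\ge4$ is needed: for $q\ge4$ one has $\k>\tfrac32$, and a short case distinction (treating $\k\in\Z$, and otherwise the value $\lfloor\k\rfloor$, where only $\lfloor\k\rfloor=1$, i.e. $q\in\{4,5\}$, is delicate) shows $\lceil\k\rceil\le\k^2$; for $q=3$ the inequality fails, matching the stated range. With $\lceil\k\rceil\le q-\k$ in hand, the factorisation of (iv) gives $g_q(\lceil\k\rceil)-g_q(\k)=(\lceil\k\rceil-\k)(q-\lceil\k\rceil-\k)\ge0$, which completes the proof.
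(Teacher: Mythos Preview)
Your proof is correct and follows essentially the same approach as the paper. The paper dismisses (i)--(iv) as ``straightforward calculations'' and the ``downward opening parabola'' shape of $g_q$, while you spell out the derivative, the factorisation $g_q(k)-g_q(s)=(k-s)(q-k-s)$, and the quadratic $p(k)=k^2-(2q+1)k+q^2$ explicitly; for (v) both you and the paper argue identically, using (iii)+(i) for the $f_q$-bound and (iv) with $s=\k$ (after checking $\lceil\k\rceil\le q-\k$ when $q\ge4$) for the $g_q$-bound.
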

\begin{proof}
	 Statements (i) and (iii) follow from straightforward calculations. Statements (ii) and (iv) follow easily using the fact that the graph of $g_q$ is a downward opening parabola. We now prove the final statement. It is obvious that $g_{q}\left(q-\k\right)=f_{q}\left(q-\k\right)\leq f_{q}\left(q-\left\lfloor\k\right\rfloor\right)$ since $f_{q}$ is increasing on $[0,q]$. 
	If $q\geq 4$, then $\left\lceil\k\right\rceil\in\left[\k,q-\k\right]$, which implies by (v) that $g_q(\k)\leq g_q\left(\left\lceil\k\right\rceil\right)$, and hence, by (ii), that $g_{q}\left(q-\k\right)\leq g_{q}\left(q-\left\lceil\k\right\rceil\right)$.
\end{proof}

\begin{theorem}\label{maintheoremalt}
	If $\mathcal{K}$ is a Kakeya set in an affine plane of order $q$ and 
	\begin{align*}
		|\mathcal{K}|&> q^2-\min\left\{g_{q}\left(q-\left\lceil\k\right\rceil\right),f_{q}\left(q-\left\lfloor\k\right\rfloor\right)\right\}=q^{2}-\min\left\{\left\lceil \k\right\rceil\left(q-\left\lceil \k\right\rceil\right),\frac{q\left(q-\left\lfloor\k\right\rfloor\right)}{\left\lfloor\k\right\rfloor+1}\right\}\;,
	\end{align*}
	then $\mathcal{K}$ contains a $(q+1-k)$-knot for some $k$ with $0\leq k< \left\lceil
	\k\right\rceil$ and
	\begin{align*}
		|\mathcal{K}|\in\left[q^{2}-kq+\frac{k(k+1)}{2},q^{2}-kq+k^{2}\right]\;.
	\end{align*}
\end{theorem}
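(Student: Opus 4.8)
The plan is to work with a point of $\mathcal{K}$ of maximal multiplicity. Let $m$ be the largest number of lines of $\mathcal{L}$ through a single point, fix a point $P$ on exactly $m$ of them, and set $k=q+1-m$; then $P$ is a $(q+1-k)$-knot and $0\le k$ since $|\mathcal{L}|=q+1$. First I would pin $|\mathcal{K}|$ into the claimed interval by a local count in the pencil through $P$, and then I would show that the hypothesis on $|\mathcal{K}|$ forces $k<\lceil\k\rceil$ by playing a second, global, count off against the first using Lemma~\ref{fqgq}.

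For the local count, the $q+1-k$ lines through $P$ pairwise meet only in $P$ and so cover $1+(q+1-k)(q-1)=q^2-kq+k$ points. Each of the remaining $k$ lines avoids $P$ and hence meets these $q+1-k$ pencil lines in $q+1-k$ distinct points (distinct, since two pencil lines meet only in $P$), leaving exactly $q-(q+1-k)=k-1$ of its points off the pencil. Counting the off-pencil points with and without multiplicity then traps $|\mathcal{K}|$: the upper bound $|\mathcal{K}|\le q^2-kq+k^2=q^2-g_q(k)$ is immediate since there are at most $k(k-1)$ such points, while for the lower bound I would count the $k(k-1)$ incidences between the $k$ outer lines and the off-pencil points and bound the overcount. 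The key inequality is $i-1\le\binom{i}{2}$ for $i\ge1$, so a point on $i$ outer lines contributes overcount $i-1$ no larger than its $\binom{i}{2}$ pairs; summing, the overcount is at most $\binom{k}{2}$ (the number of pairs of outer lines), which leaves at least $\tfrac{k(k-1)}{2}$ distinct off-pencil points and yields $|\mathcal{K}|\ge q^2-kq+\tfrac{k(k+1)}{2}$. This is exactly the displayed interval, modulo the bound $k<\lceil\k\rceil$.

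The second, global, count uses the knot numbers $n_i$ (the number of $i$-knots). The three standard identities $\sum_i n_i=|\mathcal{K}|$, $\sum_i i\,n_i=q(q+1)$, and $\sum_i\binom{i}{2}n_i=\binom{q+1}{2}$ (the last because the $q+1$ pairwise non-parallel lines meet pairwise in one point) combine to $\sum_{i\ge2}(i-1)n_i=q(q+1)-|\mathcal{K}|$ and $\sum_{i\ge2}i(i-1)n_i=q(q+1)$. Since every multiplicity is at most $m=q+1-k$, the second sum is at most $m$ times the first, so $q(q+1)-|\mathcal{K}|\ge q(q+1)/m$, which after a short simplification becomes $|\mathcal{K}|\le q^2-f_q(k)$.

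Finally I would combine the two bounds into $|\mathcal{K}|\le q^2-\max\{f_q(k),g_q(k)\}$ and argue by contraposition. Suppose $k\ge\lceil\k\rceil$. If $k\le q-\lceil\k\rceil$, then $k\in[\lceil\k\rceil,q-\lceil\k\rceil]$ and Lemma~\ref{fqgq}(iv) (applicable since $\lceil\k\rceil\le q/2$) together with the symmetry~(ii) give $g_q(k)\ge g_q(\lceil\k\rceil)=g_q(q-\lceil\k\rceil)$; if instead $k>q-\lceil\k\rceil$, then $k\ge q-(\lceil\k\rceil-1)\ge q-\lfloor\k\rfloor$ and monotonicity~(i) gives $f_q(k)\ge f_q(q-\lfloor\k\rfloor)$. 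Either way $\max\{f_q(k),g_q(k)\}\ge\min\{g_q(q-\lceil\k\rceil),f_q(q-\lfloor\k\rfloor)\}$, contradicting the size hypothesis; hence $0\le k<\lceil\k\rceil$ and the local interval finishes the proof. The hard part, conceptually, is recognising that neither count alone suffices: the pencil bound $g_q$ is sharp only for moderate $k$, whereas the averaging bound $f_q$ is what controls $k$ near $q$, and their crossover sits precisely at $q-\k$, matching Lemma~\ref{fqgq}(iii). The overcount bookkeeping and the algebraic reduction to $f_q(k)$ are routine by comparison.
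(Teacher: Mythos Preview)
Your proof is correct and follows essentially the same approach as the paper: the same local pencil count at a maximal knot yields the interval $[q^2-kq+\tfrac{k(k+1)}{2},\,q^2-kq+k^2]$, the same averaging of the standard identities yields the bound $|\mathcal{K}|\le q^2-f_q(k)$, and the same two-case argument via Lemma~\ref{fqgq} forces $k<\lceil\k\rceil$. The only cosmetic differences are that you track $|\mathcal{K}|$ directly where the paper tracks $x_0=q^2-|\mathcal{K}|$, and that your overcount bookkeeping for the lower bound is phrased via $i-1\le\binom{i}{2}$ rather than the paper's sequential ``first line covers $k-1$, second at least $k-2$, \dots'' argument.
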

\begin{proof}
	Let $\mathcal{L}$ be the affine line set that determines $\mathcal{K}$. By $x_{i}$ we denote the number of points that is contained in precisely $i$ lines of $\mathcal{L}$, $i=0,\dots,q+1$, i.e. the number of $i$-knots. Note that $x_{0}$ is the number of points not in $\mathcal{K}$.
	\par Using the standard countings of the points, the tuples $(P,\ell)$, with $P$ a point on $\ell\in\mathcal{L}$ and the tuples $(P,\ell,m)$, with $\{P\}=\ell\cap m$ and $\ell,m\in\mathcal{L}$, we find
	\begin{align}\label{standardcountingsalt}
		\sum_{i=0}^{q+1}x_{i}=q^{2}\qquad\sum_{i=0}^{q+1}ix_{i}=q^{2}+q\qquad\sum_{i=0}^{q+1}i(i-1)x_{i}=q^{2}+q\;.
	\end{align}
	Now, let $k$ be the integer such that $x_{q+1-k}>0$ but $x_{i}=0$ for all $i>q+1-k$. In other words, $\mathcal{K}$ admits at least one $(q+1-k)$-knot, but no larger knots. It follows that
	\begin{align*}
		0\leq \sum_{i=1}^{q+1}(q+1-k-i)(i-1)x_{i}\;.
	\end{align*}
	This implies that
	\begin{align*}
		0\leq x_{0}(q+1-k)+\sum_{i=0}^{q+1}(q+1-k-i)(i-1)x_{i}.
	\end{align*}
	Using \eqref{standardcountingsalt}, we find that $\sum_{i=0}^{q+1}(q+1-k-i)(i-1)x_{i}$ equals $-qk$, and hence
	\begin{align}\label{ondergrensalt}
		x_{0}\geq \frac{qk}{q+1-k}=f_{q}(k)\;.
	\end{align}
	Let $P$ be a $(q+1-k)$-knot and let $\mathcal{M}$ be the set of $k$ lines through $P$ that are not contained in $\mathcal{L}$. Any point that is not on a line of $\mathcal{L}$ (i.e. not in $\mathcal{K}$) must be on one of the lines of $\mathcal{M}$. Let $\mathcal{L}'$ be the subset of $\mathcal{L}$ containing the $k$ lines not through $P$.
	\par Let $m$ be a line of $\mathcal{M}$. Exactly one of the $k$ lines of $\mathcal{L}'$ is parallel to $m$. Hence, the other $k-1$ each intersect $m$ in (possibly coinciding) points on $m\setminus \{P\}$. This implies that at least $(q-1)-(k-1)$ points on $m$ are not covered by a line of $\mathcal{L}'$ and we find that
	\begin{align}\label{ondergrens2alt}
		x_{0}\geq k(q-k)=g_{q}(k)\;.
	\end{align}
	On the other hand, the lines of $\mathcal{L}'$ cover at least $\frac{k(k-1)}{2}$ points that are on a line of $\mathcal{M}$, different from $P$: the first line covers $k-1$ points, the second line at least $k-2$, and so on. Hence,
	\begin{align}\label{bovengrensalt}
		x_{0}\leq k(q-1)-\frac{k(k-1)}{2}=kq-\frac{k(k+1)}{2}.
	\end{align}
	\par By the assumption in the statement of the theorem we know that $x_0<g_{q}\left(q-\left\lceil\k\right\rceil\right)$ and $x_0<f_{q}\left(q-\left\lfloor\k\right\rfloor\right)$, since $x_{0}$ is smaller than the minimum of both. Recall that $k$ is the smallest integer such that there is a $(q+1-k)-$knot and that we have $x_0\geq f_q(k)$ by \eqref{ondergrensalt} and $x_0\geq g_q(k)$ by \eqref{ondergrens2alt}. Now suppose to the contrary that $k\geq\left\lceil\k\right\rceil$.
	\par We distinguish between two cases. If $k\leq q-\left\lceil\k\right\rceil$, then $k\in \left[\left\lceil\k\right\rceil,q-\left\lceil\k\right\rceil\right]$ and we know from Lemma \ref{fqgq} (ii) and (iv) that $g_{q}\left(q-\left\lceil\k\right\rceil\right)=g_{q}\left(\left\lceil\k\right\rceil\right)\leq g_{q}(k)$. Combining the previous inequalities yields
	\begin{align*}
		g_{q}(k)\leq x_0<g_{q}\left(q-\left\lceil\k\right\rceil\right)=g_{q}\left(\left\lceil\k\right\rceil\right)\leq g_{q}(k)\;,
	\end{align*}
	a contradiction. If $k> q-\left\lceil\k\right\rceil$, then $k\geq q-\left\lfloor\k\right\rfloor$, and we have $f_ {q}\left(q-\left\lfloor\k\right\rfloor\right)\leq f_{q}(k)$ since $f_{q}$ is increasing by Lemma \ref{fqgq} (i). Combining the previous inequalities now yields
	\begin{align*}
		f_{q}(k)\leq x_0<f_{q}\left(q-\left\lfloor\k\right\rfloor\right)\leq f_{q}(k)\;,
	\end{align*}
	which is also a contradiction.
	\par We conclude that if $x_{0}<\min\left\{g_{q}\left(q-\left\lceil\k\right\rceil\right),f_{q}\left(q-\left\lfloor\k\right\rfloor\right)\right\}$, the largest knot is a $(q+1-k)$-knot for some integer $k$ with $k<\left\lceil\k\right\rceil$. It now follows from \eqref{ondergrens2alt} and \eqref{bovengrensalt} that
	\begin{align*}
		|\mathcal{K}|&=q^{2}-x_{0}\leq q^{2}-kq+k^{2}\text{ and}\\
		|\mathcal{K}|&=q^{2}-x_{0}\geq q^{2}-kq+\frac{k(k+1)}{2}\;.\qedhere
	\end{align*}
\end{proof}

\begin{corollary}\label{maintheorem}
	Let $\mathcal{K}$ be a Kakeya set in an affine plane of order $q$, $q\geq 4$. If
	\begin{align*}
		|\mathcal{K}|>q^{2}-f_{q}\left(q-\k\right)= q^{2}-\left((q+1)\sqrt{q+\frac{1}{4}}-\frac{3q+1}{2}\right)\;,
	\end{align*}
	then $\mathcal{K}$ contains a $(q+1-k)$-knot for some $k$ with $0\leq k< \sqrt{q+\frac{1}{4}}-\frac{1}{2}$ and
	\begin{align*}
		|\mathcal{K}|\in\left[q^{2}-kq+\frac{k(k+1)}{2},q^{2}-kq+k^{2}\right]\;.
	\end{align*}
\end{corollary}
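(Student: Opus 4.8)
The plan is to deduce the corollary directly from Theorem \ref{maintheoremalt}, whose hypothesis is phrased in terms of the less transparent quantity $\min\left\{g_{q}\left(q-\left\lceil\k\right\rceil\right),f_{q}\left(q-\left\lfloor\k\right\rfloor\right)\right\}$. The key observation is that the cleaner threshold $f_{q}(q-\k)$ appearing in the corollary is never larger than that minimum, so the hypothesis of the corollary is in fact \emph{stronger} than (i.e. implies) the hypothesis of the theorem.

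First I would combine two parts of Lemma \ref{fqgq}. Part (iii) gives the identity $f_{q}(q-\k)=g_{q}(q-\k)$, and part (v), valid precisely under the standing assumption $q\geq 4$ of the corollary, gives $g_{q}(q-\k)\leq\min\left\{g_{q}\left(q-\left\lceil\k\right\rceil\right),f_{q}\left(q-\left\lfloor\k\right\rfloor\right)\right\}$. Chaining these yields $f_{q}(q-\k)\leq\min\left\{g_{q}\left(q-\left\lceil\k\right\rceil\right),f_{q}\left(q-\left\lfloor\k\right\rfloor\right)\right\}$, and hence $q^{2}-f_{q}(q-\k)\geq q^{2}-\min\{\dots\}$. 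Consequently any Kakeya set with $|\mathcal{K}|>q^{2}-f_{q}(q-\k)$ also satisfies $|\mathcal{K}|>q^{2}-\min\{\dots\}$, so Theorem \ref{maintheoremalt} applies and already delivers a $(q+1-k)$-knot together with the stated size interval, which is identical in both statements.

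It remains to tidy up the range of $k$ and to verify the explicit form of the threshold. For the former, Theorem \ref{maintheoremalt} gives an integer $k$ with $k<\left\lceil\k\right\rceil$; since $k$ is a non-negative integer, this forces $k\leq\left\lceil\k\right\rceil-1<\k=\sqrt{q+\tfrac14}-\tfrac12$, which is exactly the bound claimed. For the latter, I would exploit the defining relation $\k(\k+1)=q$ (equivalently $\k^{2}+\k-q=0$), which gives $\tfrac{q}{\k+1}=\k$ and hence $f_{q}(q-\k)=\tfrac{q(q-\k)}{\k+1}=\k(q-\k)=\k q-\k^{2}$; substituting $\k=\sqrt{q+\tfrac14}-\tfrac12$ and $\k^{2}=q-\sqrt{q+\tfrac14}+\tfrac12$ then collapses this to $(q+1)\sqrt{q+\tfrac14}-\tfrac{3q+1}{2}$, matching the displayed formula.

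Since all the substantive content has been offloaded onto Lemma \ref{fqgq} and Theorem \ref{maintheoremalt}, I expect no genuine obstacle here; the corollary is essentially a repackaging of the main theorem with a tidier threshold. The only points requiring a little care are the elementary ceiling inequality $k<\left\lceil\k\right\rceil\Rightarrow k<\k$ for integer $k$, and the routine algebraic simplification of $f_{q}(q-\k)$ via $\k(\k+1)=q$.
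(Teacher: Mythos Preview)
Your proposal is correct and follows essentially the same approach as the paper: use Lemma~\ref{fqgq}(iii) and (v) to show $f_{q}(q-\k)\leq\min\{g_{q}(q-\lceil\k\rceil),f_{q}(q-\lfloor\k\rfloor)\}$, then invoke Theorem~\ref{maintheoremalt}. You additionally spell out the passage from $k<\lceil\k\rceil$ to $k<\k$ and the algebraic simplification of $f_{q}(q-\k)$ via $\k(\k+1)=q$, both of which the paper leaves implicit.
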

\begin{proof}
	We recall from Lemma \ref{fqgq} (iii) and (v) that $f_{q}\left(q-\k\right)=g_{q}\left(q-\k\right)$ and that $f_{q}\left(q-\k\right)=g_{q}\left(q-\k\right)\leq\min\left\{g_{q}\left(q-\left\lceil\k\right\rceil\right),f_{q}\left(q-\left\lfloor\k\right\rfloor\right)\right\}$ if $q\geq4$. The corollary follows from Theorem \ref{maintheoremalt} using these observations.
	
\end{proof}

\begin{remark}
	Note that the bounds of Theorem \ref{maintheoremalt} are in general an improvement on those of Corollary \ref{maintheorem}. For example, for $q=17$, Corollary \ref{maintheorem} states a classification for the Kakeya sets with size at least $17^{2}-48$, but  using Theorem \ref{maintheoremalt}, we can see that we actually have a classification for the Kakeya sets with size at least $17^{2}-51$.
\end{remark}

\begin{remark}
	We see that the spectrum of admissible sizes for the largest Kakeya sets is the union of several intervals. At the upper end of the spectrum there are two intervals that contain only one integer. For $k=0$, we obtain that $|\mathcal{K}|=q^2$ and we already know that every Kakeya set of size $q^{2}$ arises from $q+1$ lines through a point. For $k=1$, we have that $|\mathcal{K}|=q^{2}-q+1$ and that the Kakeya sets of this size arise from a set of $q$ lines through a fixed line, and one line parallel to (but different from) the unique line through the $q$-knot not contained in the underlying line set.
	\par In case of a $(q-1)$-knot Theorem \ref{maintheoremalt} states that the size of the Kakeya set is either $q^{2}-2q+3$ or $q^{2}-2q+4$, but it is easy to see that only $q^{2}-2q+4$ is admissible; this result is also part of Theorem \ref{old}.
	\par The next interval, corresponding to the case of a $(q-2)$-knot, is $[q^{2}-3q+6,q^{2}-3q+9]$. We elaborate briefly on the case that there is a $(q-2)$-knot and $|\mathcal{K}|=q^{2}-3q+6$. Using the notation from the proof of Theorem \ref{maintheoremalt}, we can see that the 6 lines in $\mathcal{L}'\cup\mathcal{M}$ and their intersection points form an affine plane of order 2; together with the line at infinity and the three points on it on the lines of $\mathcal{L}'$, it forms a Fano plane. So, this case can only occur if the affine plane has a subplane of order 2. The Desarguesian affine plane $\AG(2,q)$ admits a subplane of order 2 if and only if $q$ is even. Note that it has been conjectured by Neumann that any non-Desarguesian projective plane admits a Fano subplane (see \cite{neu} for her work related to this conjecture and \cite{tait} and its references for some recent results).
\end{remark}

\begin{remark}
	The description of the spectrum as a union of intervals suggests that there are gaps in the spectrum of admissible sizes. For example it is clear that the $q-2$ integers between $q^{2}-q+1$ and $q^{2}$ cannot be Kakeya set sizes, so for $q\geq3$, there is a gap in the spectrum. For small values of $q$ (up to 9), the full spectrum has been determined in \cite[Table 1]{dm}; for $q=9$ this table only covers the Desarguesian affine plane.
	\par Using the notation from the proof of Theorem \ref{maintheoremalt}, we note that $q^{2}-h_{q}(k)>q^{2}-g_{q}(k+1)$ if $k>\sqrt{2q+\frac{1}{4}}-\frac{3}{2}$. So, if $k>\sqrt{2q+\frac{1}{4}}-\frac{3}{2}$ the intervals $\left[q^{2}-h_{q}(k+1),q^{2}-g_{q}(k+1)\right]$ and $\left[q^{2}-h_{q}(k),q^{2}-g_{q}(k)\right]$, described in the theorem, do not overlap. Further, we note that $q^{2}-h_{q}(k)>q^{2}-g_{q}(k+1)+1$ if $k>\sqrt{2q-\frac{7}{4}}-\frac{3}{2}$. So, if $k>\sqrt{2q-\frac{7}{4}}-\frac{3}{2}$ there is a nontrivial gap between the intervals $\left[q^{2}-h_{q}(k+1),q^{2}-g_{q}(k+1)\right]$ and $\left[q^{2}-h_{q}(k),q^{2}-g_{q}(k)\right]$.
	\par Finally, note that $\sqrt{2q-\frac{7}{4}}-\frac{3}{2}>\left(\sqrt{q+\frac{1}{4}}-\frac{1}{2}\right)-1$ 	for $q\geq3$. So, there is a gap between all intervals described in the statement of Theorem \ref{maintheoremalt}. Note that we are looking at the interval $\left[q^{2}-h_{q}(k+1),q^{2}-g_{q}(k+1)\right]$, so we require $k+1<\sqrt{q+\frac{1}{4}}-\frac{1}{2}$.
\end{remark}

\begin{corollary}\label{square}
	Let $\mathcal{K}$ be a Kakeya set in an affine plane of order $q$, with $q\geq9$ square. If $$|\mathcal{K}|> q^{2}-q\sqrt{q}+q,$$ then $\mathcal{K}$ contains a $(q+1-k)$-knot  with $0\leq k\leq \sqrt{q}$ and $|\mathcal{K}|\in\left[q^{2}-kq+\frac{k(k+1)}{2},q^{2}-kq+k^{2}\right]$.\end{corollary}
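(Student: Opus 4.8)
The plan is to specialise Theorem \ref{maintheoremalt} to perfect-square order by evaluating the two quantities appearing in the minimum there. Since $q$ is a square, $\sqrt{q}$ is a positive integer, so the first step is to locate $\k=\sqrt{q+\frac14}-\frac12$ between two consecutive integers. I claim $\sqrt{q}-1<\k<\sqrt{q}$. Both inequalities reduce to elementary manipulations: $\k<\sqrt{q}$ is equivalent to $\sqrt{q+\frac14}<\sqrt{q}+\frac12$, i.e.\ to $q+\frac14<q+\sqrt{q}+\frac14$, which holds for all $q>0$; and $\k>\sqrt{q}-1$ is equivalent to $\sqrt{q+\frac14}>\sqrt{q}-\frac12$, i.e.\ to $q+\frac14>q-\sqrt{q}+\frac14$, again true. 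As $\sqrt{q}$ and $\sqrt{q}-1$ are integers with $\k$ strictly between them, this gives $\left\lfloor\k\right\rfloor=\sqrt{q}-1$ and $\left\lceil\k\right\rceil=\sqrt{q}$.

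With the floor and ceiling pinned down, I would simply substitute into the two terms of the minimum in Theorem \ref{maintheoremalt}. Using the definitions of $g_q$ and $f_q$, one finds
\[
g_q\left(q-\left\lceil\k\right\rceil\right)=g_q\left(q-\sqrt{q}\right)=(q-\sqrt{q})\sqrt{q}=q\sqrt{q}-q,
\]
and
\[
f_q\left(q-\left\lfloor\k\right\rfloor\right)=f_q\left(q-\sqrt{q}+1\right)=\frac{q(q-\sqrt{q}+1)}{\sqrt{q}}=q\sqrt{q}-q+\sqrt{q}.
\]
Since $\sqrt{q}>0$, the second expression exceeds the first, so the minimum equals $q\sqrt{q}-q$, and the threshold $q^{2}-\min\{\dots\}$ of Theorem \ref{maintheoremalt} becomes exactly $q^{2}-q\sqrt{q}+q$, which is precisely the hypothesis of the corollary. (Note that here one evaluates the minimum directly and does not need Lemma \ref{fqgq}(v).)

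The conclusion then follows at once: Theorem \ref{maintheoremalt} produces a $(q+1-k)$-knot for some integer $k$ with $0\le k<\left\lceil\k\right\rceil=\sqrt{q}$, whence in particular $0\le k\le\sqrt{q}$ as claimed, together with the size interval $|\mathcal{K}|\in\left[q^{2}-kq+\frac{k(k+1)}{2},\,q^{2}-kq+k^{2}\right]$. I expect no genuine obstacle here: the whole argument is a clean specialisation of Theorem \ref{maintheoremalt}, the only point requiring care being the determination of $\left\lfloor\k\right\rfloor$ and $\left\lceil\k\right\rceil$ through the two elementary inequalities above. I note in passing that this computation only uses that $q\ge4$ is square; the stronger hypothesis $q\ge9$ in the statement is presumably imposed for consistency with the sharp Baer-subplane results announced in the abstract, which concern subplanes of order $\sqrt{q}\ge3$.
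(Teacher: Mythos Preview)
Your proposal is correct and follows essentially the same route as the paper: determine $\lfloor\k\rfloor=\sqrt{q}-1$ and $\lceil\k\rceil=\sqrt{q}$, evaluate the two terms in the minimum of Theorem~\ref{maintheoremalt} as $q\sqrt{q}-q$ and $q\sqrt{q}-q+\sqrt{q}$, and read off the conclusion. The paper's proof is simply a terser version of exactly this computation, asserting the floor and ceiling without the explicit inequality check you supply.
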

\begin{proof}
	Since $q$ is a square, $\sqrt{q}$ is an integer. We apply Theorem \ref{maintheoremalt} where we have  $\left\lfloor\k\right\rfloor=\sqrt{q}-1$ and $\left\lceil \k\right\rceil=\sqrt{q}$, so $\min\left\{\left\lceil \k\right\rceil(q-\left\lceil \k\right\rceil),\frac{q\left(q-\left\lfloor\k\right\rfloor\right)}{\left\lfloor\k\right\rfloor+1}\right\}=\min\{\sqrt{q}(q-\sqrt{q}),\sqrt{q}(q-\sqrt{q}+1)=q\sqrt{q}-q$.\end{proof}

We now give an example of a Kakeya set of size $q^{2}-q\sqrt{q}+q$ for which the largest knots are $(\sqrt{q}+1)$-knots; hypothetically, if Theorem \ref{maintheoremalt} could be extended to Kakeya sets of this size, it would force the existence of a $(q+1-\sqrt{q})$-knot. Furthermore, note that the lower bound in Corollary \ref{maintheorem}, $q^{2}-\left((q+1)\sqrt{q+\frac{1}{4}}-\frac{3q+1}{2}\right)\approx q^{2}-q\sqrt{q}+\frac{3}{2}q-\frac{9}{8}\sqrt{q}+\frac{1}{2}$, is very close to $q^{2}-q\sqrt{q}+q$, so an improvement of Corollary \ref{maintheorem} with purely combinatorial means seems unlikely to succeed.

\begin{example}\label{baer}
	Let $\mathcal{P}$ be a projective plane of order $q$, $q$ a square, that admits a subplane $\mathcal{B}$ of order $\sqrt{q}$ (e.g. the Desarguesian plane). Let $\ell$ be any line in $\mathcal{P}$ that meets $\mathcal{B}$ in precisely one point $P$, and let $\mathcal{A}$ be the affine plane we get by removing $\ell$ and its points from $\mathcal{P}$.
	\par Let $\mathcal{L}'$ be the set of $q$ lines not through $P$ that are extended lines of $\mathcal{B}$, and let $m$ be a line through $P$ that is an extended line of $\mathcal{B}$; there are $\sqrt{q}+1$ choices for $m$. Then $\mathcal{L}=\mathcal{L}'\cup\{m\}$ is a set of $q+1$ lines that defines a Kakeya set $\mathcal{K}$. Since any point of $\mathcal{A}\setminus\mathcal{B}$ is on precisely one line that is an extended subline of $\mathcal{B}$, we find that only the $\sqrt{q}(q-\sqrt{q})$ points of $\mathcal{A}$, lying on an extended line through $P$, different from $m$, are not on a line of $\mathcal{L}$. Hence $|\mathcal{K}|=q^{2}-q\sqrt{q}+q$. This Kakeya set has $q$ points that are $\sqrt{q}$-knots, $\sqrt{q}$ points that are $(\sqrt{q}+1)$-knots, and no larger knots.
\end{example}

Given Example \ref{baer}, we see that the bound in Corollary \ref{square} is indeed sharp. Hence for planes of square order containing a Baer subplane, our classification is the best possible.

\paragraph*{Acknowledgements:} This research was partially carried out when the first author was visiting the School of Mathematics and Statistics at the University of Canterbury. He wants to thank the School, and in particular the second author, for their hospitality. 

We thank the anonymous reviewers for their detailed suggestions.

\noindent Maarten De Boeck\\
Ghent University\\
Department of Mathematics: Algebra and Geometry\\
Krijgslaan 281--S25\\
9000 Gent\\
Flanders, Belgium\\
maarten.deboeck@ugent.be
\\

\noindent Geertrui Van de Voorde\\
University of Canterbury\\
School of Mathematics and Statistics\\
Private Bag 4800\\
8140 Christchurch\\
New Zealand\\
geertrui.vandevoorde@canterbury.ac.nz
\end{document}